\theoremstyle{plain}
\newtheorem{theorem}{Theorem}[section]
\newtheorem{proposition}[theorem]{Proposition}
\newtheorem{lemma}[theorem]{Lemma}
\newtheorem{corollary}[theorem]{Corollary}
\theoremstyle{definition}
\newtheorem{definition}[theorem]{Definition}
\theoremstyle{remark}
\newtheorem{claim}{Claim}
\newtheorem{case}[theorem]{Case}
\newtheorem{remark}[theorem]{Remark}
\DeclareMathOperator{\Sing}{Sing}
\newcommand{\QED}{\ifhmode\unskip\nobreak\fi\quad {\rm Q.E.D.}} 
\newcommand\iso{\cong}
\newcommand{\f}{\varphi}
\renewcommand{\L}{\mathcal{L}}
\renewcommand{\O}{\mathcal{O}}
\renewcommand{\P}{\mathbb{P}}
\newcommand{\Q}{\mathbb{Q}}
\newcommand{\rat}{\dasharrow}
\title{The minimal Cremona degree of quartic surfaces}
\author{Massimiliano Mella}
\address{Dipartimento di Matematica e Informatica\\
Universit\`a di Ferrara\\
Via Machiavelli 30\\
44121 Ferrara, Italia} \email{mll@unife.it}
\date{May 2021}
\subjclass{Primary 14E25 ; Secondary 14E05, 14N05, 14E07}
\keywords{Birational maps; Cremona equivalence; embeddings;
hypersurfaces}
\begin{document}
\maketitle
{\hfill\it Dedicated to Ciro Ciliberto with admiration}
\vspace{1cm}
\begin{abstract}
  Two birational projective varieties in $\P^n$ are Cremona Equivalent if there is a
  birational modification of $\P^n$ mapping one onto the other. The
  minimal Cremona degree of $X\subset\P^n$ is the minimal integer
  among all degrees of varieties that are Cremona Equivalent to $X$.
  The Cremona Equivalence and the minimal Cremona degree
  is well understood for subvarieties of codimension at least $2$
  while both are in general very 
  subtle questions for divisors. In this note I compute the minimal
  Cremona degree of quartic surfaces in $\P^3$. This allows me to show
  that any quartic surface of elliptic ruled type has non trivial stabilizers in the
  Cremona group.
\end{abstract}

\section*{Introduction}
Birational geometry and birational maps are one of the most peculiar
aspects of Algebraic Geometry. Among the many interests of Ciro in all
realms of Algebraic Geometry, and actually Mathematics, an important
spot has to be reserved for birational arguments, not only for their
intrinsic interests but also for their link to the Italian school of
geometry and to projective geometry.

The most studied birational object is certainly the Cremona Group,
that is the group of birational self-maps of the projective space
$$Cr_n:=\{ f:\P^n\dasharrow\P^n | \mbox{ birational map}\}.$$
This group is wild, from almost all points of view, see \cite{Ca} for
a nice introduction. In this note I will focus my attention on a 
problem that is related to the wildness of $Cr_n$, the so called
Cremona Equivalence.

Let $X,Y\subset\P^N$ be irreducible and reduced birational
varieties. The subvariety $X$ and $Y$ are said to be  Cremona Equivalent if there is a
birational modification $\f:\P^N\dasharrow\P^N$  such that $\f(X)=Y$
and $\f$ is an isomorphism of the generic point of $X$.
The Cremona Equivalence problem has an old history that I  will
resume in Section \ref{sec:history} and a quite recent evolution
thanks to the modern tools of birational geometry inherited from
Minimal Model Program and Sarkisov program.

As a matter of fact any pair of birational projective varieties is
Cremona equivalent as long as their codimension is at least two. This
is a quite surprising result proved in \cite{MP1} and improved in
\cite{CCMRZ}. Note that this forces $Cr_n$ to contain, as a set, all
groups of birational modifications of its subvarieties.

The divisorial case is quite intricate.
It is easy to give examples of non Cremona Equivalent divisors,
\cite{MP1}, but it is quite hard to understand the divisors
that are Cremona Equivalent to a given one. For instance rational
divisor in $\P^n$  Cremona
Equivalent to a hyperplane are only known for $\P^2$ and in a less
precise way $\P^3$.

A natural notion arising from Cremona Equivalence, is that of minimal Cremona degree. see Definition~\ref{def:mCd}.
The complete classification of minimal Cremona degree plane curves is
known , \cite{MP2} and  \cite{CC1}, and Ciro together with Alberto Calabri
completed also the classification of minimal Cremona degree linear
system of plane curves, \cite{CC1}.

Recently in a series of papers, \cite{MP2} \cite{Me2} \cite{Me3},  I tried to shed some light on the
surface case and here I present the classification of minimal
Cremona degree surfaces of degree at most $4$, see Theorem~\ref{thm:class_mCd4}.
This is done following the main ideas in \cite{Me3} and plugging in
the detailed description of singularities of non rational quartic
surfaces obtained in a series of papers by Umezu and Urabe, \cite{Um1}
\cite{Um2} \cite{Ur}. This is the real bottleneck of my methods: the
need of a complete understanding of the singularities of divisors I am
considering. This prevents me to extend this classification to
surfaces of higher degrees.

I want to finish the introduction thanking Ciro for all he taught me
during our long friendship and for all the nice moments we shared both
in life and in mathematics. I am in debt, more than you ever
thought.

\section{History and background}
\label{sec:history}

Let $C\subset\P^2$ be an irreducible and reduced  plane curve. It is natural to ask what is the
minimal degree of curves that are equivalent to $C$ via a Cremona
modification. This is a classical problem studied since the ${\rm XIX^{th}}$
century by Cremona and Noether. More generally one can introduce the
notion of minimal Cremona degree as follows.

\begin{definition}\label{def:mCd}
  Let $X\subset\P^{n} $ be an irreducible and reduced
  hypersurface. The minimal Cremona degree of $X$ is
  $$\min\{d|\mbox{$X$ is Cremona Equivalent to a hypersurface of
    degree $d$}\}. $$
 The divisor $X$ is of minimal Cremona degree if its degree is equal
 to the minimal Cremona degree. That is it not possible to lower its
 degree with a Cremona modification in $Cr_n$.
\end{definition}

As I said the  case of plane curves has been widely treated  in
the old times, \cite{Ju} \cite{Ca1} \cite{Ca2}; see also the beautiful books of Coolidge
\cite{Co}and Conforto \cite{Con} for a complete account of the result
proved by that time. More recently the subject has been studied with the theory
of log pairs, \cite{Na}, \cite{Ii}, \cite{KM} and finally with a
mixture of old and new techniques a complete classification of minimal
Cremona degree irreducible plane curves has been achieved in 
\cite{CC1}, and \cite{MP2}.

As a matter of fact the Cremona equivalence
of a plane curve is dictated by its singularities but, unfortunately, its minimal
Cremona degree cannot be
guessed without a partial resolution of those, \cite[Example
3.18]{MP2}.  Due to this it is quite hard even in the plane curve case to
determine the minimal Cremona degree  of a fixed curve simply by its
equation.
The main tool developed in the ${\rm XX^{th}}$ century and improved by Ciro
and Alberto  is 
the theory of adjoint linear systems.
Let $D\subset\P^N$ be a divisor and $f:X\to\P^N$ a log
resolution of $(\P^N,D)$, with $D_X$ the strict transform
divisor. The adjoint linear system, with $m\geq n$, is
$$ adj_{n,m}(D)=f_*(|nD_X+mK_X|). $$
S. Kantor first noticed that the dimension of adjoint linear systems
is invariant under Cremona modifications.
It is easy to see that $adj_{n,m}(D)$ is independent of the resolution
$f$, as long as $m\geq n$, therefore a divisor of minimal Cremona
degree $1$ has all adjoint linear systems empty.

It is  quite natural to ask weather the opposite is true and actually
for plane curves this is one of the main results obtained at the
beginning of ${\rm XX^{th}}$ century.
\begin{theorem}[\cite{CE}] An irreducible and reduced curve $C\subset\P^2$ is
  Cremona Equivalent to a line if and only if all of its adjoint vanish.
\end{theorem}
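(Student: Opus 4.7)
\emph{Proof plan.} The forward implication is immediate from Kantor's invariance of adjoint dimensions: it suffices to check the claim for a line $L\subset\P^2$, which is its own log resolution. Since $nL+mK_{\P^2}$ has degree $n-3m\leq -2n<0$ whenever $m\geq n\geq 1$, the system $|nL+mK_{\P^2}|$ is empty, and hence every curve Cremona equivalent to $L$ has all adjoints empty.

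For the converse, assume $adj_{n,m}(C)=\emptyset$ for every $m\geq n$. Fix a log resolution $f:X\to\P^2$ of $(\P^2,C)$ and let $D_X$ denote the strict transform of $C$. The plan is first to conclude that $C$ is rational, and then to use a $(K_X+D_X)$-MMP to produce a Cremona modification contracting $C$ to a line.

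For the rationality, note that $D_X$ is smooth, so adjunction gives $(K_X+D_X)|_{D_X}=K_{D_X}$. The short exact sequence $0\to\O_X(K_X)\to\O_X(K_X+D_X)\to\omega_{D_X}\to 0$, combined with $H^1(X,K_X)=0$ (which holds because $X$ is rational, hence $H^1(X,\O_X)=0$), shows that the restriction $H^0(X,K_X+D_X)\to H^0(D_X,\omega_{D_X})$ is surjective. The hypothesis $adj_{1,1}(C)=\emptyset$ forces $h^0(X,K_X+D_X)=0$, so $h^0(\omega_{D_X})=0$, i.e.\ $D_X\iso\P^1$ and $C$ is rational. Next I would run a $(K_X+D_X)$-MMP: any effective representative of $n(K_X+D_X)$ would push forward to a nonzero element of $adj_{n,n}(C)$, so $K_X+D_X$ is not pseudo-effective and the program terminates on a Mori fiber space $g:Y\to Z$. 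Let $D_Y$ be the image of $D_X$; because $D_X$ has genus zero and the contracted extremal rays all have negative intersection with $K+D$, one checks $D_Y$ is not contracted. A case analysis on the possible outputs---$Y\iso\P^2$, or $Y\iso\F_e$ a Hirzebruch surface with $D_Y$ a fiber or a minimal section---followed by a classical sequence of elementary transformations yields a birational map $Y\dasharrow\P^2$ carrying $D_Y$ to a line. Composing with $f^{-1}$ gives the desired Cremona modification of $\P^2$ sending $C$ to a line.

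The principal obstacle is the MMP step: one must verify that the vanishing of all adjoints is preserved at each extremal contraction (so that the hypothesis remains active throughout the program) and that the resulting $(Y,D_Y)$ is simple enough to admit explicit Cremona reduction to a line. This is the technical heart of the Coolidge--Enriques argument; the singularities of $C$ enter through the geometry of the exceptional divisors of $f$ meeting $D_X$, and it is precisely this fine control that makes the plane-curve case tractable while leaving the higher-dimensional analogues open.
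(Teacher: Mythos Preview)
The paper does not prove this theorem: it is stated with a citation to Castelnuovo--Enriques \cite{CE} as a classical result and is used only as historical background. There is therefore no ``paper's own proof'' to compare against; what follows is an assessment of your sketch on its own merits.

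Your outline is the standard modern approach (log Kodaira dimension plus a directed MMP, in the spirit of \cite{Na}, \cite{KM}, \cite{Ii}), and the forward implication and the rationality step are fine. The one point that is not merely a technicality is the sentence ``because $D_X$ has genus zero and the contracted extremal rays all have negative intersection with $K+D$, one checks $D_Y$ is not contracted.'' This is backwards: precisely because $(K_X+D_X)\cdot D_X=\deg K_{D_X}=-2<0$, the curve $D_X$ itself is $(K+D)$-negative, and if at some stage $D^2=-1$ it \emph{is} a legitimate extremal contraction of the $(K+D)$-MMP. Nothing you have said prevents the program from choosing that ray and annihilating the curve you are trying to track. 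The usual fix is either to run a plain $K_X$-MMP (which on a smooth rational surface only contracts $(-1)$-curves and terminates at $\P^2$ or some $\F_e$, carrying $D$ along as a curve) and then analyse the image, or to argue separately that whenever $D$ becomes a $(-1)$-curve one can first perform an elementary transformation that keeps $D$ visible. Either way this step needs an argument, not a parenthetical ``one checks''.

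Your final paragraph correctly identifies the preservation of adjoint vanishing along the MMP as the crux; note that for contractions of $(-1)$-curves $E$ disjoint from $D$ one has $K_X+D_X=g^*(K_{X_1}+D_{X_1})+E$, so $h^0(n(K+D))$ can only drop, and the vanishing does propagate. The genuine work, as you say, is the endgame analysis on $\P^2$ or $\F_e$.
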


In modern terms, also related to the Abhyankar--Moh
problem \cite{AM}, we may rephrase this result saying that a plane curve
$C$ is Cremona Equivalent to a line if and only if its log Kodaira
dimension is negative.
Pushing the theory of adjoint linear systems Ciro and Alberto 
were able to classify minimal Cremona degree curves,  minimal
Cremona degree linear systems
and the contractibility of configurations of lines,
\cite{CC1} \cite{CC2}.

It is then natural to investigate surfaces in $\P^3$, keeping in mind
that, quite often, the numerical invariants related to the canonical class and
its log variants are not subtle enough in higher dimensions. Think of  the beautiful 
Castelnuovo's rationality Theorem and the wild rationality behavior  of
Fano 3-folds.

For the Cremona Equivalence of surfaces it is useful to adopt  the
$\sharp$-Minimal Model Program, developed in \cite{Me1} or minimal model program
with scaling \cite{BCHMc}. In this way a  criterion for detecting
surfaces Cremona equivalent to a plane has been given in \cite{MP2}. The criterion, inspired by
the previous work of Coolidge on curves Cremona equivalent to lines
\cite{Co}, allows to determine all rational surfaces
that are Cremona equivalent to a plane, \cite[Theorem 4.15]{MP2}. 
Unfortunately, worse than in the plane
curve case, the criterion requires not only the
resolution of singularities but also a control on different log
varieties attached to the pair $(\P^3,S)$.

Let us start to enter in the Cremona Equivalence problem for surfaces with
some notations and definitions.
\begin{definition}
Let $(T,H)$ be a $\Q$-factorial uniruled
   3-fold and $H$ an irreducible and
   reduced Weil divisor
  on $T$. Let
  $$\rho=\rho_{H}=\rho(T,H)=:\mbox{  \rm sup   }\{m\in \Q|H+mK_T
\mbox{
  \rm is an effective $\Q$-divisor  }\}\geq 0,$$
 be the (effective) threshold of the pair
 $(T,H)$.
\end{definition}
\begin{remark} The threshold is not a
birational invariant of  pairs and it
is not preserved by blowing up.
Consider
 a plane $H\subset\P^3$ and let
$Y\to \P^3$ be the blow up of a point in $H$ 
then $\rho(Y,H_Y)=0$, while
$\rho(\P^3,H)=1/4$. For future reference note that both are less than
one.
\end{remark}
In \cite{MP2}, to overcome this problem it was introduced the notion
of good models and of sup threshold.

\begin{definition}
Let $(Y,S_Y)$ be a 3-fold pair. The pair $(Y,S_Y)$ is a birational
model of the pair $(T,S)$ if there is a birational map $\f:T\rat Y$
such that  $\f$ is well defined on the generic point of
$S$ and $\f(S)=S_Y$. 
A good model, \cite{MP2}, is a pair  $(Y,S_Y)$ with $S_Y$ smooth
and $Y$ terminal and $\Q$-factorial.
\end{definition}

\begin{remark}
 Let $(T,S)$ be a pair, to produce a good model it is enough to
 consider a log resolution of $(T,S)$. Clearly there are infinitely
 many good models for any pair and running a directed MMP one can find
 the one that is more suitable for the needs of the moment.
\end{remark}

The threshold  allowed to
produce an equivalent condition to being Cremona Equivalent to a
plane, \cite[Theorem 4.15]{MP2}, but unfortunately it is  almost impossible to
check this condition on specific examples. More recently, \cite{Me3}, a numerical
trick allowed to simplify the criteria  and provided an effective test
for a large class of rational surfaces.

\begin{lemma}[\cite{Me3}]\label{lem:rhominorediuno} Let $(T,S)$ and $(T_1,S_1)$ be
  birational models of a pair. Assume that $(T,S)$ has canonical singularities. If
  $\rho(T,S)=a\geq 1$ then $\rho(T_1,S_1)\geq a$.
\end{lemma}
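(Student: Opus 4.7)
Since $\rho(T,S)=a\geq 1$, fix an effective $\Q$-divisor $D\sim_\Q S+aK_T$ on $T$. Choose a smooth 3-fold $W$ together with birational morphisms $f:W\to T$ and $g:W\to T_1$ forming a common log resolution of both models, and denote by $S_W$ the common strict transform of $S$ and $S_1$. The strategy is to rewrite $S_W+aK_W$ as something effective by pulling back via $f$, and then to push forward by $g$ to conclude effectivity of $S_1+aK_{T_1}$.

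For the pullback step, write
\[
K_W = f^*K_T + \sum_i\alpha_iE_i, \qquad f^*S = S_W + \sum_i\beta_iE_i,
\]
summed over the $f$-exceptional primes $E_i$. One has $\alpha_i\geq 0$ by canonicity of $T$ and $\beta_i\geq 0$ by effectivity of $S$, and the log discrepancy formula $K_W+S_W = f^*(K_T+S) + \sum_i(\alpha_i-\beta_i)E_i$ combined with the canonicity of the pair $(T,S)$ yields $\alpha_i-\beta_i\geq 0$ for every $i$. Multiplying the first identity by $a$ and adding the second, then rearranging, gives
\[
S_W + aK_W \sim_\Q f^*D + \sum_i(a\,\alpha_i - \beta_i)E_i,
\]
and the hypothesis $a\geq 1$ yields $a\,\alpha_i-\beta_i = (\alpha_i-\beta_i)+(a-1)\alpha_i\geq 0$ for every $i$. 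Hence $S_W+aK_W$ is $\Q$-linearly equivalent to an effective divisor on $W$.

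To finish, apply $g_*$. Since $g$ is birational, $g_*S_W=S_1$ and $g_*K_W\sim K_{T_1}$ (any canonical divisor on $W$ restricts to a canonical divisor on $T_1$ over the common open set where $g$ is an isomorphism, while $g$-exceptional components push to zero). Because push-forward sends effective divisors to effective divisors, $S_1+aK_{T_1}\sim_\Q g_*(S_W+aK_W)$ is $\Q$-linearly equivalent to an effective divisor on $T_1$, so $\rho(T_1,S_1)\geq a$.

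The main obstacle is controlling the sign of the exceptional coefficients $a\,\alpha_i-\beta_i$. Both hypotheses of the lemma are essential here: canonicity of $(T,S)$ supplies $\alpha_i\geq\beta_i$, while $a\geq 1$ is exactly what lets one scale the inequality by $a$ without flipping its sign. If either assumption were relaxed, a single exceptional prime could appear with negative coefficient in $f^*(S+aK_T) - (S_W+aK_W)$, and the clean identification of $S_W+aK_W$ with an effective divisor, and hence the push-forward argument on $T_1$, would break down.
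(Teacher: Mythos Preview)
The paper does not supply a proof of this lemma; it is imported from \cite{Me3} and stated without argument, so there is nothing in the present text to compare against. Your discrepancy computation is the standard one and is correct: on a common resolution the canonicity of the pair $(T,S)$ gives $\alpha_i-\beta_i\geq 0$, together with $\beta_i\geq 0$ this forces $\alpha_i\geq 0$ (note that the hypothesis is canonicity of the \emph{pair}, not of $T$ alone, so your phrase ``$\alpha_i\geq 0$ by canonicity of $T$'' is slightly misattributed, though the inequality itself follows exactly as indicated), then $a\geq 1$ yields $a\alpha_i-\beta_i\geq 0$, and pushing forward to $T_1$ finishes. This is undoubtedly the argument in \cite{Me3} as well.

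One small technicality: you begin by fixing an effective $D\sim_\Q S+aK_T$, tacitly assuming the supremum defining $\rho(T,S)$ is attained at $a$. If one wants to be fastidious, run the same computation for each rational $b$ with $1\leq b\leq \rho(T,S)$ and $S+bK_T$ $\Q$-effective, deduce $\rho(T_1,S_1)\geq b$ for every such $b$, and pass to the supremum.
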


 As a direct consequence of Lemma~\ref{lem:rhominorediuno} one can reformulate the condition of being Cremona
 Equivalent to a plane as follows.

 \begin{corollary}[\cite{Me3}]\label{cor:rhominorediuno} A rational surface $S\subset\P^3$ is Cremona equivalent
   to a plane if and only if there is a good model $(T,S_T)$ of
   $(\P^3,S)$ with $0<\rho(T,S_T)<1$.
 \end{corollary}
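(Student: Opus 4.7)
The plan is to combine Lemma~\ref{lem:rhominorediuno} with the criterion \cite[Theorem~4.15]{MP2}, which already characterizes rational surfaces Cremona equivalent to a plane via the threshold on a suitable canonical good model. The content of the corollary is to show that checking the threshold on any one good model is enough, together with the observation that a plane in $\P^3$ produces a model with threshold exactly $1/4$.

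\textbf{The ``only if'' direction.} If $S$ is Cremona equivalent to a plane, pick a Cremona modification $\f : \P^3 \rat \P^3$ with $\f(S) = H$ a plane. The pair $(\P^3, H)$ is itself a good model of $(\P^3, S)$: $\P^3$ is smooth (hence terminal and $\Q$-factorial), $H$ is smooth, and $\f^{-1}$ is a birational map regular at the generic point of $H$ sending it to $S$. Since $K_{\P^3} \sim -4H$, one reads off $\rho(\P^3, H) = 1/4 \in (0, 1)$.

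\textbf{The ``if'' direction.} Suppose $(T, S_T)$ is a good model of $(\P^3, S)$ with $0 < \rho(T, S_T) < 1$. From $(T, S_T)$ produce a canonical good model $(T', S'_{T'})$ via a log resolution followed by a directed MMP; birationality with $(\P^3, S)$ is preserved by transitivity, so $(T', S'_{T'})$ is again a birational model of $(\P^3, S)$. Now apply Lemma~\ref{lem:rhominorediuno} in contrapositive form: if $\rho(T', S'_{T'}) = a \geq 1$, the lemma would force $\rho(T, S_T) \geq a \geq 1$, contradicting the hypothesis. Hence $\rho(T', S'_{T'}) < 1$, which places $(T', S'_{T'})$ in the regime covered by \cite[Theorem~4.15]{MP2}, yielding that $S$ is Cremona equivalent to a plane.

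\textbf{Where the work lies.} The essential technical bookkeeping is verifying that the canonical $(T', S'_{T'})$ produced from $(T, S_T)$ genuinely remains a good model of $(\P^3, S)$, i.e., that the log resolution and MMP steps do not contract $S'_{T'}$ or destroy regularity at its generic point; this is routine once one works with a directed MMP that scales away from $S'_{T'}$. The conceptual work is then entirely done by Lemma~\ref{lem:rhominorediuno}, which transports the inequality $\rho < 1$ from the arbitrary good model provided in the hypothesis to the canonical one needed by the old criterion, thereby bypassing the computationally painful sup threshold that made \cite[Theorem~4.15]{MP2} hard to apply in practice.
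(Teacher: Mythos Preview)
The paper does not prove this corollary here—it is cited from \cite{Me3} with only the remark that it follows as a ``direct consequence'' of Lemma~\ref{lem:rhominorediuno}. Your argument fleshes out exactly that route: the forward direction via the good model $(\P^3,H)$ with $\rho=1/4$ (already computed in the Remark preceding the definition of good models), and the reverse direction by the contrapositive of Lemma~\ref{lem:rhominorediuno} together with the older criterion \cite[Theorem~4.15]{MP2}; this is correct and matches the intended approach. One small simplification: the directed MMP after the log resolution is superfluous, since a log resolution already produces a log smooth (hence canonical) good model to which Lemma~\ref{lem:rhominorediuno} applies.
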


There is a class of divisor that are always Cremona Equivalent to a hyperplane.
\begin{remark}
  \label{rem:monoids} Let $S\subset\P^3$ be a monoid, that is an
  irreducible and reduced surface of degree $d$ with a point, say $p$, of
  multiplicity $d-1$. Then $S=(x_3F_{d-1}+F_d=0)$, consider the linear
  system 
$$\L:=\{(F_{d-1}x_0=0), (F_{d-1}x_1=0), (F_{d-1}x_2=0), S\}. $$
Then $\f_\L:\P^3\rat\P^3$ is a birational modification and $\f_\L(S)$
is a plane. That is any monoid is Cremona Equivalent to a plane.
\end{remark}

As a warm up I apply Corollary~\ref{cor:rhominorediuno}  and
Remark~\ref{rem:monoids} to  determine the minimal Cremona degree of all surfaces of degree at
most 3.

\begin{proposition}
  Let $S\subset\P^3$ be an irreducible and reduced surface of
  degree at most 3 and $\sigma$ its minimal Cremona degree.
  Then $\sigma\in\{1,3\}$ and 
  \begin{itemize}
  \item[$\sigma=1$] if and only if $S$ is rational
    \item[$\sigma=3$] if and only if $S$ is not rational, i.e. $S$ is
      a cone over an elliptic curve.
  \end{itemize}
\end{proposition}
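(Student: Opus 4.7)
The plan is to split the analysis along rationality. The first point to record is that Cremona equivalence preserves the birational type of $S$, so $\sigma = 1$ forces $S$ to be rational. Moreover every irreducible reduced surface of degree at most $2$ in $\P^3$ is rational, so for a non-rational $S$ one automatically gets $\sigma \geq 3$. Since $\sigma \leq \deg S \leq 3$, this already yields $\sigma = 3$ whenever $S$ is not rational, and the remaining task is twofold: prove $\sigma = 1$ in the rational case, and identify the non-rational cubics as the stated elliptic cones.

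For the structural description I would argue that a non-rational irreducible reduced cubic is forced to be an elliptic cone. A smooth cubic in $\P^3$ is classically rational, and a cubic with a point $p$ of multiplicity $2$ is rational via the projection from $p$ (a generic line through $p$ meets $S$ in exactly one further point, so the projection is birational). Hence a non-rational cubic must carry a point of multiplicity $3$; any line through such a point meets $S$ with multiplicity at least $4$ and is therefore contained in $S$, so $S$ must be a cone. The non-rationality of the cone is equivalent to the non-rationality of the base plane cubic, that is, to the base being a smooth elliptic curve.

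For the rational direction I would case-split on the degree. Degree $1$ is trivial; a quadric has smooth points, hence points of multiplicity $d-1 = 1$, so by Remark~\ref{rem:monoids} it is Cremona equivalent to a plane. For a rational cubic I would distinguish three subcases by the maximal multiplicity of a singular point. If $S$ contains a point of multiplicity $2$, then $S$ is a monoid and Remark~\ref{rem:monoids} finishes the job. If $S$ is a cone with a point of multiplicity $3$, its rationality forces the base cubic to be singular, and the ruling of $S$ through the singular point of the base is a line of double points of $S$, so $S$ is again a monoid. The one remaining case is a smooth cubic $S$, which I would handle via Corollary~\ref{cor:rhominorediuno}: the pair $(\P^3, S)$ is already a good model (both $\P^3$ and $S$ are smooth and $\P^3$ is $\Q$-factorial), and from $K_{\P^3} = -4H$ together with $\deg S = 3$ one reads off $\rho(\P^3, S) = 3/4 \in (0, 1)$, whence $S$ is Cremona equivalent to a plane.

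The main obstacle is the smooth cubic, as it is the only subcase not covered by the monoid trick. Even then it is only a mild obstacle, because $(\P^3, S)$ itself serves as a good model and the threshold reduces to a one-line degree count; in the higher-degree classifications mentioned in the introduction this is precisely the point at which one must invoke the detailed singularity analysis that makes things genuinely hard.
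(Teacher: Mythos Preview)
Your proof is correct and follows essentially the same decomposition as the paper: degree $\leq 2$ via Remark~\ref{rem:monoids}, smooth cubics via Corollary~\ref{cor:rhominorediuno} with $\rho(\P^3,S)=3/4$, and cubics with a double point via the monoid remark. The one minor difference is the rational cubic cone: the paper simply invokes \cite{Me2} on cones, whereas you note that rationality forces the base cubic to be singular, so the ruling through that singular point is a line of double points and the monoid remark applies again --- a slightly more self-contained route to the same conclusion.
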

\begin{proof} 
  The statement is immediate in degree 2 by Remark~\ref{rem:monoids}. Let $S$ be a rational
  cubic. If $S$ is smooth then $(\P^3,S)$ is a good model with
  $\rho(\P^3,S)=3/4$, hence we conclude by
  Corollary~\ref{cor:rhominorediuno}.
  If $S$ has a double point, then it is a monoid and Remark~\ref{rem:monoids}
  allows to  conclude. If $S$ is a cone I conclude by \cite{Me2}.
\end{proof}

My aim is to improve this result determining the minimal Cremona
degree of quartic surfaces in $\P^3$.

The case of quartics is, as usual, more subtle due to their own
intrinsic  complexity.
Smooth quartic surfaces are the only smooth hypersurfaces
with automorphisms not coming from linear automorphisms of $\P^n$,
\cite{MM}. In a recent paper K. Oguiso produced examples of isomorphic
smooth
quartic surfaces that are not Cremona Equivalent, \cite{Og}. It is a long standing problem
to determine which quartic surfaces are stabilized by non trivial
subgroups in $Cr_3$, that is for which quartic surface $S\subset\P^3$ there is a
Cremona modification $\omega:\P^3\rat\P^3$ such that $\omega$ is not
an isomorphism and $\omega(S)=S$. The above problem has been studied
by Enriques \cite{En} and Fano \cite{Fa} and also by Sharpe and coauthors in a series of papers,
\cite{MS} and \cite{SS}, at the beginning of the ${\rm XX^{\rm th}}$
century. More recently Araujo-Corti-Massarenti continued the study of mildly
singular quartic surfaces admitting a non trivial  stabilizers in the Cremona
Group, \cite{ACM}, in the context of Calabi-Yau pairs preserving
symplectic forms.

On the other hand the singularities of quartic surfaces are completely
classified, \cite{De}, and there are a few hundreds of non isomorphic
rational  quartic surfaces, \cite{Je16}. This allowed,  quite
surprisingly, to prove the following result.

\begin{theorem}[\cite{Me3}] \label{th:quartic} Let $S\subset\P^3$ be a rational quartic
  surface then $S$ is Cremona Equivalent to a plane.
\end{theorem}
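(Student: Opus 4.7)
The strategy is to apply Corollary~\ref{cor:rhominorediuno}: it suffices to produce, for each rational quartic surface $S\subset\P^3$, a good model $(T,S_T)$ of $(\P^3,S)$ with $0<\rho(T,S_T)<1$. Observe that on $\P^3$ itself one has $\rho(\P^3,S)=1$ (since $S+K_{\P^3}\sim 0$), so the starting pair sits exactly on the boundary of the criterion, and the whole task reduces to finding a birational modification that pushes $\rho$ strictly below~$1$ without leaving the class of good models.

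First I would clear out the classical easy cases. If $S$ has a point of multiplicity $3$, it is a monoid and Remark~\ref{rem:monoids} produces an explicit Cremona modification sending $S$ to a plane, with no threshold computation at all. If $S$ is a cone with vertex $p$ over a rational plane quartic curve, then projection from $p$ together with the plane-curve classification handled in \cite{Me2} reduces the problem to the $\P^2$ case. After these reductions I may assume that $S$ is not a cone and all of its singularities have multiplicity~$\leq 2$.

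The core of the argument is then a case-by-case analysis built on the fine classification of singularities of quartic surfaces due to Jessop, Umezu, and Urabe, which enumerates the finitely many analytic types of singular configurations supporting a rational quartic. For each type I would fix a log resolution $f:Y\to\P^3$ of $(\P^3,S)$ and then run a $\sharp$-MMP in the sense of \cite{Me1}, or an MMP with scaling, on the pair $(Y,\wave{S})$ to explore its good models. The aim is to find an extremal contraction or flip after which the resulting good model has threshold strictly less than~$1$. Rationality of $S$ is precisely what makes this possible: it guarantees enough negativity of $K_Y+a\wave{S}$ for some $a<1$ so that the MMP genuinely produces such a step, rather than stabilising on the boundary $\rho=1$ via Lemma~\ref{lem:rhominorediuno}.

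The main obstacle is the volume of the case analysis. Jessop's list contains a few hundred configurations, and each one calls for its own MMP computation, because the behaviour of $\rho$ through flips and divisorial contractions depends sharply on the local analytic type of each singularity; the naive blow-up of a double point, for instance, leaves $\rho$ unchanged at $1$. It is precisely the detailed local descriptions of Umezu and Urabe that let this bookkeeping be carried out, which is why, as stressed in the introduction, the method cannot yet be pushed past degree four.
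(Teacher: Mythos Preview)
This theorem is not proved in the present paper at all: it is quoted verbatim from \cite{Me3} and used as an input. There is therefore no proof here to compare your proposal against.

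That said, your text is not a proof but a strategy outline. You describe what you \emph{would} do --- dispose of monoids and cones, then run a $\sharp$-MMP case by case over the Jessop/Degtyarev list --- and explicitly stop short of carrying out any of the cases, noting that ``the main obstacle is the volume of the case analysis''. As an architectural sketch this is broadly in line with what the paper indicates about \cite{Me3}: the criterion is indeed Corollary~\ref{cor:rhominorediuno}, and the classification of singular quartics is indeed the input. However, judging from the way \cite[Proposition~2.4]{Me3} is invoked later in the proof of Lemma~\ref{lem:isolated_sings}, the actual mechanism in \cite{Me3} appears to be more concrete than ``run an abstract MMP on a log resolution'': for each non-simple double point one writes down an explicit linear system of quadrics $\Lambda_a$ adapted to the infinitely near structure of the singularity, and the associated map sends $\P^3$ birationally to a cone over (a projection of) the Veronese surface, on which the threshold computation becomes tractable. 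Your remarks that ``the naive blow-up of a double point leaves $\rho$ unchanged at $1$'' and that rationality ``guarantees enough negativity'' correctly identify the difficulty but are not arguments; the substance of \cite{Me3} lies precisely in producing, for each singular type, the specific weighted blow-up or linear system that makes the threshold drop, and none of that is present in your proposal.
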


This shows that any rational quartic has a huge
stabilizer in the Cremona group disregarding the type of singularity it may
have. Indeed it is amazing that, even if there are hundreds of non
isomorphic families of
rational quartics
the Cremona group of $\P^3$ is playable enough to smooth any of them
to a plane. In the next section I 
determine the minimal Cremona degree of an arbitrary quartic surface,
adapting the techniques used to prove Theorem~\ref{th:quartic} to an
arbitrary quartic surface. 

\section{Minimal Cremona degree of quartics}

My aim is to study the minimal Cremona degree of an arbitrary
quartic. The main tool I use, beside the $\sharp$-Minimal Model techniques,  is the complete classification of
singularities of quartic surfaces, see \cite{Je16} \cite{De} and in
particular the detailed analysis of the singular locus given in 
\cite{Um1}\cite{Um2} \cite{Ur}.

Let us  start treating quartic cones.

\begin{lemma}
  \label{lem:cones} Let $S$ be a quartic cone in $\P^3$. Then $S$ is
  of minimal degree if and only if its sectional genus is at least $2$.
\end{lemma}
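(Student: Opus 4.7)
The approach is to analyze the quartic cone $S$ via its plane quartic base $C\subset\P^2$ (with cone vertex $v\in\P^3$); the sectional genus of $S$ equals $g:=p_g(C)$, and blowing up $v$ exhibits $S$ as birational to $\P^1\times C$.

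For the ``if'' direction, suppose $g\geq 2$. Since Cremona equivalence implies birational equivalence, it suffices to show $S$ is not birational to any irreducible surface in $\P^3$ of degree at most $3$. By the preceding Proposition, every such surface is either rational or a cone over an elliptic curve, hence birational to $\P^2$ or $\P^1\times E$ with $E$ elliptic. The birational class of the base of a non-rational ruled surface is a birational invariant; since $g(C)\geq 2$, the surface $S$ is birational to none of these, so $S$ is of minimal Cremona degree.

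For the ``only if'' direction, suppose $g\leq 1$. The plan is to reduce $C$ to a curve $C'$ of lower degree by a plane Cremona transformation $\tilde\sigma\in\mathrm{Cr}(\P^2)$ and lift $\tilde\sigma$ to $\omega\in\mathrm{Cr}(\P^3)$ taking the cone $S$ to the cone over $C'$. Writing $\tilde\sigma=[\phi_0:\phi_1:\phi_2]$ with $\phi_i$ homogeneous of degree $d$ in $(x_0,x_1,x_2)$ and picking a generic homogeneous $L(x_0,x_1,x_2)$ of degree $d-1$, one sets $\omega:=[\phi_0:\phi_1:\phi_2:x_3 L]$; this is birational (the inverse has the same shape, built from $\tilde\sigma^{-1}$), commutes with projection from $v$, and therefore sends $S$ to the cone over $\tilde\sigma(C)$. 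For $g=0$, take $C'$ to be a line (so $\omega(S)$ is a plane; alternatively, apply Theorem~\ref{th:quartic} directly). For $g=1$, take $C'$ to be a smooth plane cubic birational to $C$, so $\omega(S)$ is a cubic elliptic cone. In either case $S$ is Cremona equivalent to a surface of degree strictly less than $4$.

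The main obstacle is establishing, in the $g=1$ case, the existence of $\tilde\sigma$ with $\tilde\sigma(C)$ a plane cubic---i.e., the classical fact that birational irreducible plane curves are Cremona equivalent (see \cite{CC1}). For simple configurations (e.g., $C$ with two nodes) a single standard quadratic transformation suffices, based at the two nodes and a smooth point of $C$, giving $\deg\tilde\sigma(C)=2\cdot 4-(2+2+1)=3$; tacnode and ramphoid-cusp configurations require a composition of such transformations. Given $\tilde\sigma$, the lift $\omega$ and the identification of $\omega(S)$ with the cone over $\tilde\sigma(C)$ are direct computations.
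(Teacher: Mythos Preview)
Your argument is correct, but it takes a different route from the paper. The paper's proof is two sentences: it observes that surfaces of degree less than $4$ are rational or elliptic cones, and then invokes \cite[Corollary~2.7]{Me2}, which states that two surface cones in $\P^3$ are Cremona equivalent if and only if their hyperplane sections are birational. Both directions then follow immediately from the birational classification of curves of genus $0$, $1$, and $\geq 2$.

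You instead reprove the relevant half of the cited result by hand. Your ``if'' direction is the same idea written out (birational invariance of the base of a ruled surface). Your ``only if'' direction is the substantive divergence: rather than citing \cite{Me2}, you explicitly lift a plane Cremona transformation $\tilde\sigma=[\phi_0:\phi_1:\phi_2]$ to $\omega=[\phi_0:\phi_1:\phi_2:x_3L]\in Cr_3$ and check that $\omega$ sends the cone over $C$ to the cone over $\tilde\sigma(C)$. This is exactly the mechanism underlying \cite{Me2}. One small caveat: your parenthetical that ``the inverse has the same shape, built from $\tilde\sigma^{-1}$'' is not literally an inverse in general (the composite need not be the identity for generic $L$, $M$), but $\omega$ is nonetheless birational, since it maps each line through the vertex isomorphically to the line over $\tilde\sigma$ of the base point; so the conclusion stands. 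Also, the general slogan ``birational irreducible plane curves are Cremona equivalent'' is stronger than what you need; what you actually use, and what is classical, is that a plane curve of geometric genus $0$ (resp.\ $1$) is Cremona equivalent to a line (resp.\ a smooth cubic).

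The trade-off: the paper's proof is concise but outsources the geometry to \cite{Me2}; yours is self-contained and makes the Cremona map explicit, at the cost of length and of having to handle the infinitely-near singular configurations of genus-one quartics case by case.
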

\begin{proof}A surface of degree less then $4$ is either rational or
  an elliptic cone.
  By \cite[Corollary 2.7]{Me2} two surface cones are Cremona
  Equivalent if and only if their hyperplane sections are
  birational.
\end{proof}

Next I study non normal quartics.
\begin{lemma}\label{lem:non_isolated_sing} Let $S\subset\P^3$ be a non
  normal quartic,  which is not
  a cone, then $S$ is not of minimal Cremona degree.
\end{lemma}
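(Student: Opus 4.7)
The plan is to reduce the statement to Theorem~\ref{th:quartic} by proving that a non-normal non-cone quartic is rational; the theorem then provides a Cremona modification sending $S$ onto a plane, so the minimal Cremona degree of $S$ is $1$ and in particular strictly less than $4$.

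To establish rationality I would first bound the degree of the one-dimensional component $D\subseteq\Sing S$. Since $S$ is irreducible and not a cone, the second Bertini theorem gives that the general hyperplane section $H\cap S$ is an irreducible plane quartic; this quartic is singular at each of the $\deg D$ points of $H\cap D$, and an irreducible plane quartic has arithmetic genus $3$ and hence carries at most three singular points. Thus $\deg D\le 3$. If moreover $S$ has a point of multiplicity $\ge 3$ then it is a monoid and Remark~\ref{rem:monoids} finishes the proof, so I may restrict to the case where the singularities of $S$ are at worst double.

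Next I would construct in each configuration an explicit pencil of rational curves on $S$. The cleanest case is when $D$ contains a line $L$: choosing coordinates with $L=\{x_0=x_1=0\}$ forces the defining equation to lie in $I_L^2$, so
\[
S=\{x_0^2 A+x_0 x_1 B+x_1^2 C=0\}
\]
for quadratic forms $A,B,C$, and the pencil of planes through $L$ cuts $S$ residually in a pencil of conics, realising $S$ as a conic bundle over $\Proj^1$ and hence as a rational surface. The configurations in which $D$ is an irreducible conic, a twisted cubic, or a plane cubic are dealt with in the same spirit, using the linear system of planes (respectively quadrics) containing $D$ to produce a pencil of conics or lines on $S$ with section. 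The classification of quartic singularities in \cite{Um1,Um2,Ur} keeps the list of configurations one has to check short and explicit.

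The main obstacle is the higher-degree cases: one must verify that the candidate fibration actually has rational fibres and admits a section on the normalisation, ruling out an unwanted elliptic or irrational structure coming from a non-reduced or reducible double curve. Once rationality is confirmed in every case, a direct appeal to Theorem~\ref{th:quartic} completes the argument.
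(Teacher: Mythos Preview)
Your strategy cannot work: it is not true that every non-normal non-cone quartic is rational. The reference you invoke, Urabe's classification \cite{Ur}, already exhibits the irrational families, and these are precisely the surfaces the lemma must handle. The clearest case is a quartic $S$ whose singular locus is a pair of skew lines $L_1,L_2$. In your pencil of planes through $L_1$ the residual conic is never smooth: each plane meets $L_2$ in one point, which is a double point of $S$ and hence of the residual conic, so that conic splits into two lines. On the normalisation $\tilde S$ these two lines separate, and the Stein factorisation of $\tilde S\to\P^1$ is a $\P^1$-bundle over the double cover $B\to\P^1$ parametrising the lines of the ruling; a discriminant count gives four branch points, so $B$ is an elliptic curve and $\tilde S$ is elliptic ruled, not rational. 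The second irrational family listed by Urabe---a single double line along which the general transversal slice has an $A_3$ singularity---behaves the same way. Your closing caveat about ``ruling out an unwanted elliptic structure'' is therefore not a technicality that can be discharged: the elliptic structure is genuinely present.

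The paper does not attempt to prove rationality. It disposes of the rational case via Theorem~\ref{th:quartic} and then, for the irrational non-normal quartics, lowers the degree by an explicit construction: the quadrics through a double line map $\P^3$ birationally onto $\P^1\times\P^2\subset\P^5$ and send $S$ to a divisor of type $(3,2)$ still singular along a curve; two further projections from singular points bring the pair down to $(\P^3,Z)$ with $Z$ a cubic surface. Thus $S$ is Cremona equivalent to a cubic, hence not of minimal degree, and the argument never needs $S$ to be rational.
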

\begin{proof} If $S$ is rational I apply
  Theorem~\ref{th:quartic}. Then I  assume that $S$ is
  not rational. Let $Y\subset S$ be the singular locus of $S$. Then by
  the classification  of \cite{Ur} I have that either $Y$ is a pair
  of skew lines or $Y$ is a line and the general hyperplane section,
  say $H$, has an
  $A_3$ singularity in $H\cap Y$.
 
 Here I mimic part of the proof in \cite[Proposition 2.4]{Me3}
Let $L\subset Y$ be a line and 
$x\in S$ a general point. Consider the
linear system  $\Lambda$ of
quadrics through $L$ and $x$. Let $\f:\P^3\rat\P^5$ be the map
associated to the linear system $\Lambda$. I have
$\f(\P^3)=Z\iso\P^1\times\P^2$, embedded via the Segre map, and $\f(S)=\tilde{S}$ is a divisor of type
$(3,2)$ in $\P^1\times\P^2$. Note that divisors of type $(1,0)$ are
planes and divisors of type $(0,1)$ are quadrics, then I have
$\deg\tilde{S}=3+4=7$.
\begin{claim}
  The surface $\tilde{S}$ is singular along a smooth conic.
\end{claim}
\begin{proof}[Proof of the Claim]
If $Y=L\cup R$ is a pair of lines then $L\cap R=\emptyset$ and clearly
$\\f(R)$ is a smooth conic, singular for $\tilde{S}$.
Assume that $Y=L$ and let $\nu:T\to \P^3$ be the blow up of $L$, with
exceptional divisor $E$. Then $E\cong\P^1\times\P^1$ and since the
general hyperplane section of $S$ has a singular point of type $A_3$
I have that $\nu_*^{-1}(S)\cap E$ is a conic and $\nu_*^{-1}(S)$ is
singular along this conic. This is enough to conclude. 
\end{proof}

Let $y\in\Sing(\tilde{S})$ be  a general point and $\pi:\P^5\rat\P^4$ the
projection from $y$. Then $\pi_{|Z}$ is a birational map, $Y:=\pi(Z)\subset\P^4$
   is a quadric of rank $4$, and $S_Q:=\pi(\tilde{S})$ is a
 surface of degree $7-2=5$.
  \begin{claim}\label{cl:always_singular}
    The vertex of the quadric is a smooth point  of $S_Q$.
  \end{claim}
  \begin{proof}
The surface    $\tilde{S}$ is a divisor of type $(3,2)$ in $Z$ and
    it is singular in $y$. Let $l$ and $P$, respectively, be the line
    and the plane passing through $x$ in $Z$. The general choice of
    $x\in S$ yields $l\not\subset \tilde{S}$. The line $l$ is
    mapped to the vertex of the quadric and $\tilde{S}_{|l}=2x+p$ for some point
    $p$. This shows that $S_Q$ contains the vertex of the quadric and
    it is smooth there.
  \end{proof}
 
  The 3-fold $Q$ is a quadric cone and $S_Q$ is singular along a
  line. Let $z\in \Sing(S_Q)$ be a point.
  By the Claim~\ref{cl:always_singular} $z$ is not
the vertex of $Q$. Thus the projection from $z$ produces a birational
model of $(Q, S_Q)$ , say $(\P^3,Z)$, with  $Z$ a cubic
surface. Therefore $S$ is not of minimal degree.
\end{proof}

 \begin{remark} Incidentally note that
    Lemma~\ref{lem:non_isolated_sing} gives a different proof of
    \cite[Proposition 2.6]{Ur}, where it is proven that a non normal
    quartic birational to a ruled surface over a curve of genus $2$ is
    a cone.
  \end{remark}

 Finally I treat the case of normal quartics. Let us first
 recall the following well known result, \cite[Proposition 8]{Um1}.
 \begin{proposition}\label{pro:Umezu}
The minimal resolution of a normal quartic surface $S\subset\P^3$ is
one of the following:
\begin{itemize}
\item[i)] a K3 surface
\item[ii)]  a rational surface
\item[iii)] birationally equivalent to an elliptic ruled surface
  \item[iv)] a ruled surface of genus 3.
  \end{itemize}.
  \end{proposition}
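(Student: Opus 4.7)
The plan is to analyze the minimal resolution $\pi\colon\tilde{S}\to S$ through adjunction and the Enriques classification of surfaces. Since $S\subset\P^3$ is a Gorenstein hypersurface of degree $4$, adjunction gives $\omega_S\cong\O_S$, so $K_S$ is numerically trivial. Writing
$$K_{\tilde{S}}=\pi^*K_S+\sum a_iE_i=\sum a_iE_i,$$
minimality of the resolution forces $a_i\leq 0$, so $-K_{\tilde{S}}$ is effective and supported on the exceptional locus.

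If every singularity of $S$ is a rational double point, then all $a_i$ vanish and $K_{\tilde{S}}\equiv 0$. The ideal sequence $0\to\O_{\P^3}(-4)\to\O_{\P^3}\to\O_S\to 0$ gives $h^1(\O_S)=0$; since du Val singularities are rational, $h^1(\O_{\tilde{S}})=h^1(\O_S)=0$, and I land in case (i). Otherwise some $a_i<0$, making $-K_{\tilde{S}}$ effective and non-zero; since an effective non-zero divisor has positive intersection with an ample class, $K_{\tilde{S}}$ is not pseudo-effective. By the Enriques classification $\tilde{S}$ is uniruled, and therefore birational to a ruled surface $R\to B$ over a smooth curve $B$ of some genus $g$.

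It remains to pin down the possible values of $g$. A general hyperplane section $C=S\cap H$ is a plane quartic of arithmetic genus $3$, and its normalization $\bar C$ therefore has geometric genus at most $3$. The strict transform $\tilde{C}\subset\tilde{S}$ moves in a three-dimensional linear system, so it cannot be contained in fibers of the ruling $\tilde{S}\dasharrow B$; consequently $\bar C$ dominates $B$ and Hurwitz gives $g\leq 3$, leaving $g\in\{0,1,2,3\}$. The real obstacle is ruling out $g=2$. My plan would be to show that if $\tilde{S}$ were ruled over a genus $2$ curve $B$, then the contribution of the exceptional divisors to $-K_{\tilde{S}}$ together with the constraint $\omega_S\cong\O_S$ would pin the local singularity types of $S$ down to a very short list, and then check case by case against the classification of isolated non-canonical singularities on quartics compiled by Umezu and Urabe that none of them is realized on a quartic in $\P^3$. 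This classification-based exclusion is the main step, and it is the reason the statement is cited from \cite{Um1} rather than reproved here.
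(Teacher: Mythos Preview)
The paper does not give its own proof of this proposition: it is quoted verbatim as a known result, with the attribution ``\cite[Proposition 8]{Um1}''. So there is no argument in the paper to compare your sketch against.

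On the merits, your outline is sound up to the point where you stop. Adjunction gives $\omega_S\cong\O_S$; the minimal-resolution discrepancies satisfy $a_i\le 0$; the du Val case yields a K3; otherwise $-K_{\tilde S}$ is effective and non-zero, hence $K_{\tilde S}\cdot H<0$ for ample $H$ and $\kappa(\tilde S)=-\infty$; and the genus bound $g(B)\le 3$ via a generic hyperplane section is correct (in fact a general $C$ is a smooth plane quartic of genus exactly $3$, since $S$ is normal). All of this is standard and matches the shape of Umezu's argument.

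The gap you yourself flag is real and is the whole content of the proposition beyond generalities: ruling out $g=2$ is not a routine check. Your proposed plan (``pin down the local singularity types and check against the Umezu--Urabe lists'') is circular as stated, because those lists are established \emph{after} Proposition~8 in \cite{Um1}; Umezu first proves the present proposition by a cohomological and lattice-theoretic analysis of the possible non-rational singular points on a normal quartic (bounding the total geometric genus of the singularities by $\chi(\O_S)-\chi(\O_{\tilde S})$ and then controlling which combinations actually occur), and only afterwards derives the explicit singularity tables you want to invoke. So if you want an honest self-contained proof, you would have to reproduce that part of \cite{Um1} rather than cite the downstream classification. As written, your proposal is a correct reduction to the hard step together with an acknowledgment that the hard step is being outsourced---which is exactly what the paper itself does by citing \cite{Um1}.
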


  It is immediate that quartic surfaces in i) and iv) are of minimal
  Cremona degree, see the proof of Theorem~\ref{thm:class_mCd4} for
  the details. Theorem ~\ref{th:quartic} treats surfaces in
  ii). Then I am left to study surfaces in iii). That is surfaces of
  elliptic ruled type.
  The main tool I use for this type of surfaces is the
  detailed description of their singularities contained in
  \cite{Um2}. I summarize what I need in the following Theorem.
  \begin{theorem}{\rm (\cite[Corollary pg 134]{Um2})}
    \label{th:sing_iso_4} Let $S\subset\P^3$ be a normal quartic of
    elliptic ruled type. Then the set of  irreducible components of a minimal  resolution of its singular
    locus contains either two disjoint elliptic curves or an elliptic
    curve, say $E$,  and one rational curve  intersecting $E$.
    In particular a minimal resolution has always at least two
    irreducible components with at least one elliptic curve.
  \end{theorem}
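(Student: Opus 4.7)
The statement above is quoted from Umezu, so my plan is to outline how one would prove it along the lines of her local-analytic approach. The key numerical inputs are adjunction (which gives $\omega_S\cong\O_S$ because $S$ is a quartic in $\P^3$, and in particular $p_g(S)=1$, $q(S)=0$) together with the birational invariants of the elliptic ruled target ($q(\tilde S)=1$, $p_g(\tilde S)=0$).

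First I would pass to the minimal resolution $\pi\colon\tilde S\to S$ and apply the Leray spectral sequence. Since $S$ is normal, $\pi_*\O_{\tilde S}=\O_S$; since $S$ is a hypersurface in $\P^3$, $H^1(S,\O_S)=0$. The five-term sequence then embeds $H^1(\tilde S,\O_{\tilde S})$ into $H^0(S,R^1\pi_*\O_{\tilde S})$. The latter sheaf is supported at $\Sing(S)$, with stalks of dimension $p_g(S,p)$, the geometric genus of the singularity, so
\[
1=q(\tilde S)\leq \sum_{p\in\Sing(S)} p_g(S,p).
\]
This forces $S$ to carry at least one Gorenstein elliptic singularity in the sense of Laufer, and the adjunction relation $K_{\tilde S}=-\sum_i\alpha_i E_i$ (valid because $K_S\sim 0$) then pins down the discrepancies that such a point can produce.

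Next I would invoke the classification of Gorenstein elliptic singularities (simple elliptic singularities, cusps, Laufer's minimally elliptic, and their non-minimal elliptic degenerations) and test which configurations are realizable as singular points of a non-cone quartic in $\P^3$. The embedding bounds the multiplicity of $S$ at the singular point, and constrains the fundamental cycle via intersection with a generic hyperplane section, exactly in the spirit of \cite{Um2}. Going through the resulting list one concludes that the exceptional divisor is either a pair of disjoint smooth elliptic curves sitting over two distinct singular points, or comes from a single singular point whose exceptional divisor contains a smooth elliptic curve $E$ together with at least one rational component meeting $E$.

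The main obstacle is this last step: excluding the purely cuspidal configurations, in which the entire elliptic genus is contributed by a cycle of rational curves rather than by an honest elliptic component. The adjunction-plus-Leray argument above only detects the total elliptic genus; it does not, on its own, force a geometric elliptic curve to appear in the exceptional fibre. Ruling out the cuspidal configurations on a quartic of ruled type, together with ruling out higher multiplicity elliptic Gorenstein points which would contradict the degree of $S$, is the heart of Umezu's analysis and requires the explicit local-equation and multiplicity bookkeeping carried out in \cite{Um1,Um2}.
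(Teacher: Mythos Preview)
The paper does not prove this statement: it is quoted from Umezu \cite{Um2} and used as a black box in the proof of Lemma~\ref{lem:isolated_sings}. There is therefore no proof in the paper to compare your proposal against; you have correctly recognised this and offered a sketch of Umezu's strategy rather than an independent argument.

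Your outline is accurate as far as it goes, and you are honest that the decisive step---ruling out purely cuspidal configurations on a quartic---is not something your sketch supplies. One sharpening is worth recording: since $R^q\pi_*\O_{\tilde S}=0$ for $q\geq 2$, the Leray sequence actually extends to an exact sequence
\[
0\longrightarrow H^1(\tilde S,\O_{\tilde S})\longrightarrow H^0(S,R^1\pi_*\O_{\tilde S})\longrightarrow H^2(S,\O_S)\longrightarrow H^2(\tilde S,\O_{\tilde S})\longrightarrow 0,
\]
and plugging in $q(\tilde S)=1$, $p_g(S)=1$, $p_g(\tilde S)=0$ gives the equality $\sum_{p\in\Sing(S)} p_g(S,p)=2$, not merely the inequality $\geq 1$ that you stated. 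This equality is the true starting point of Umezu's case analysis: it forces either two minimally elliptic points (each with $p_g=1$) or a single point with $p_g=2$, and the remaining work is to show that on a degree-$4$ surface of elliptic ruled type the exceptional configuration always contains a genuine smooth elliptic component rather than only a cycle of rational curves. That exclusion is, as you say, exactly the content of \cite{Um2} and requires the multiplicity and local-equation bookkeeping carried out there.
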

  I am ready to complete the analysis.
  \begin{lemma}\label{lem:isolated_sings}
    Let $S\subset \P^3$ be a normal quartic of elliptic ruled type.
   Then $S$ is not of minimal Cremona degree. 
  \end{lemma}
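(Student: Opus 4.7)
The plan is to mimic the strategy of Lemma~\ref{lem:non_isolated_sing}, adapted to isolated elliptic singularities. By Theorem~\ref{th:sing_iso_4}, $S$ has at least one elliptic double point; either (a) two disjoint ones $p_1,p_2$, or (b) a single one $p$ whose exceptional locus on the minimal resolution consists of an elliptic curve meeting a rational curve. Since a triple point would make $S$ a monoid and therefore rational by Remark~\ref{rem:monoids}, every singular point of $S$ has multiplicity exactly~$2$.

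I would associate an auxiliary line $L\subset\P^3$ to the singular data: in case (a) the secant $L=\overline{p_1p_2}$; in case (b) a line through $p$ chosen compatibly with the rational exceptional curve in the resolution. In either case $L\cdot S$ is concentrated at the singular point(s), with total intersection multiplicity at least~$4$. Following Lemma~\ref{lem:non_isolated_sing}, I then take the linear system $\Lambda$ of quadrics through $L$ and a general point $x\in S$, obtaining a map $\f:\P^3\dasharrow Z\subset\P^5$ whose image is the Segre $\P^1\times\P^2$. By computing $(\f^*H)^2\cdot S_Y$ on a suitable blowup $Y$ of $\P^3$ along $L$ and $x$, one determines the degree of $\tilde S=\f(S)$ in $\P^5$ and its bidegree on $\P^1\times\P^2$.

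The crucial input is the description of $\Sing(\tilde S)$. The multiplicity~$2$ of $S$ at each $p_i$ forces the strict transform of $S$ in $\Bl_L\P^3$ to be singular along the entire fiber of the exceptional divisor over $p_i$; under $\f$ these fibers become lines $L_i$ along which $\tilde S$ is a double curve. Mirroring Claim~\ref{cl:always_singular} and the projection step of Lemma~\ref{lem:non_isolated_sing}, one then projects $\tilde S$ from a general point $z\in L_i$, obtaining a birational model inside a singular quadric of $\P^4$ of reduced degree. Iterating this projection from the surviving singular locus eventually places $S$ inside $\P^3$ as a surface of degree at most~$3$. Since $S$ is elliptic ruled and hence not rational, the image must be a non-rational surface of degree exactly~$3$, that is an elliptic cubic cone; consequently $S$ is not of minimal Cremona degree.

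The main obstacle is controlling the cascade of projections: each step requires the intermediate surface to retain a singular locus of multiplicity $\geq 2$ along which one can center the next projection. In case (a) the lines $L_1,L_2$ supply this structure initially and must be shown to persist (or to be supplanted by newly created double loci) through each projection. In case (b) one has only the single point $p$, and the auxiliary line $L$ must be extracted from the compound singularity using the local description in~\cite{Um2}, possibly after an initial blow-up of $p$ in $\P^3$ to surface the relevant curves before applying the quadric construction.
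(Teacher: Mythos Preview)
Your sketch has a genuine gap at its central step. In Lemma~\ref{lem:non_isolated_sing} the line $L$ lies in $\Sing(S)$, so $\nu^*S=S_Y+2E$ and the bidegree drops to $(3,2)$ with a built-in double conic on $\tilde S$. In your setting $S$ is normal, hence smooth at the generic point of any line $L\subset\P^3$; blowing up $L$ gives $\nu^*S=S_Y+\epsilon E$ with $\epsilon\le 1$, and the fiber $E_{p_i}$ over an isolated double point $p_i$ is \emph{contained} in $S_Y$ but is not a double curve of $S_Y$ in general. A local computation (write $S=(f=0)$ with $f\in\mathfrak m_{p_i}^2$, $f=yg+zh$ along $L=(y=z=0)$, and blow up) shows $S_Y$ is smooth at the generic point of $E_{p_i}$ as soon as the linear parts of $g,h$ are independent. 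So the ``double lines $L_i$'' on $\tilde S$ that you need to feed the projection cascade simply do not exist, and the degree count on $\P^1\times\P^2$ is off from the start. You also acknowledge that case~(b) is unresolved; without a concrete choice of $L$ and a verified singular locus on $\tilde S$, nothing survives of the argument there.

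The paper takes a different route that sidesteps this obstruction. It does not look for a line in $\P^3$ at all; instead it uses the Jessop--Degtyarev local normal forms (types (1) and (2), both featuring an \emph{infinitely near double line}) and chooses a linear system $\Lambda_a\subset|\O(2)|$ adapted to that valuation, i.e.\ quadrics with prescribed multiplicity along the infinitely near line rather than along any honest line of $\P^3$. The resulting map lands birationally on a cone $X_a$ over (a projection of) the Veronese surface, with $S_a\in|\O_{X_a}(2)|$. The crucial use of Theorem~\ref{th:sing_iso_4} is not your case split (a)/(b): it is invoked inside case~($S_1$) to certify that the weighted blow-up does \emph{not} resolve all singularities of $S$, hence $S_1$ is still singular and one can project down to case~($S_2$). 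From ($S_2$) two further projections produce a non-normal quartic, and one finishes with Lemmas~\ref{lem:cones} and~\ref{lem:non_isolated_sing}. If you want to salvage your approach, the object to blow up is the infinitely near double line, not a secant line through the singular points.
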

  \begin{proof} The surface $S$ is not rational and not a cone. In
  particular $S$ has only singular points of
  multiplicity $2$ and  by hypothesis $S$ has at least an irrational point.
  By \cite{Je16} and \cite{De} classification, the  irrational
 singularities are of the following type, in brackets the corresponding equation of $S$:
 \begin{itemize}
 \item[(1)] a double point with an
   infinitely near double line

   [$x_0^2x_1^2+x_0x_1Q_2(x_2,x_3)+F_4(x_1,x_2,x_3)=0$], 
 \item[(2)]  a tachnode with an infinitely near
   double line

   [$x_0^2x_1^2+x_0(x_2^3+x_1Q_2(x_2,x_3))+F_4(x_1,x_2,x_3)=0$]. 
\end{itemize}

\noindent Let $S$ be a  quartic with a singular point of type $(a)$, $a\in\{1,2\}$, and  let $\Lambda_a\subset|\O(2)|$ be the linear
system of quadrics having multiplicity  $a+1$  on the
valuation associated to the double line. Then it is easy to check that
the map
$$\f_{\Lambda_a}:\P^3\rat
X_a\subset\P^{7-a}$$
is birational.

As observed in \cite[Proposition 2.4]{Me3} we have two cases:
\begin{itemize}
\item[($S_1$)] $X_1\subset\P^6$ is the cone over
  the Veronese surface
  \item[($S_2$)]  $X_2\subset\P^5$ is the cone over 
the cubic surface $C\subset\P^4$, where $C$ is the projection of the
Veronese surface, say $V$,  from a point $z\in V$.
\end{itemize}

The main point here is that in both cases I have
$S_a:=\f_{\Lambda_a}(S)\subset|\O_{\P^{7-a}}(2)|$, in particular $\deg
X_a=5-a$ and $\deg
S_a=10-2a$.
\begin{case}[$S_2$] Assume that $S$ has a point of type (2).
Then the pair $(\P^3,S)$ is birational to $(X_2,S_2)$.
The surface $S_2\subset X_2\subset\P^5$ has degree 6 and $X_2$
has degree $3$. Let $x\in S_2$ be a general point and
$\pi:\P^5\rat\P^4$ the projection from $x$. Then $\pi(X_2)=Q$ is a
quadric cone and $S_x:=\pi(S_2)$ is a surface of degree 5. Hence there
is a a cubic hypersurface $D\subset\P^4$ such that
$$D_{|Q}=S_x+H,$$ for some plane $H$. Let $y\in S_x$ be a general
point and $\pi_y:\P^4\rat\P^3$ the projection from $y$.
\begin{claim}
  $\tilde{S}:=\pi_y(S_x)$ is a quartic surface singular along a line.
\end{claim}
\begin{proof}
  The point $y$ is general therefore $\deg\tilde{S}=4$. The map
  $\pi_{y|Q}$ is birational and it contract  the embedded tangent cone
  ${\mathbb T}_yQ\cap Q=\Pi_1\cup\Pi_2$ to a pair of lines $l_1\cup l_2$. Up to reordering I
  may assume that $H\cap \Pi_1$ is the vertex of the cone.  Therefore $\tilde{S}\cap
  \Pi_1$ is a cubic passing through $y$. Hence $\tilde{S}$ has
  multiplicity 2 along $l_1$.
\end{proof}
In particular the surface  $S$ is  Cremona Equivalent to a non normal quartic. 
\end{case}

\begin{case}[$S_1$] Assume that $S$ has a point of type (1). Then
  $(\P^3, S)$ is birational to $(X_1,S_1)$.
  First I prove that $S_1$ is always singular and on the smooth locus
  of $X_1$.
  \begin{claim} $S_1$ is in the smooth locus of $X_1$  and $S_1$ is singular
  \end{claim}
  \begin{proof} I need to describe deeper the map
    $\f:=\f_{\Lambda_1}:\P^3\rat X_1$, following \cite[Proposition 2.4]{Me3}.

    Let $S\subset\P^3$ be
  the quartic, I
  may assume that there is an irrational singular point of type (1) in
  $p\equiv[1,0,0,0]\in S$ and the equation of $S$ is
$$(x_0^2x_1^2+x_0x_1Q+F_4=0)\subset\P^3.$$
Let
   $\epsilon:Y\to\P^3$ be the weighted
blow up of $p$,
  with weights $(2,1,1)$ on the
  coordinates $(x_1,x_2,x_3)$, and
  exceptional divisor $E\iso\P(1,1,2)$. Then I have:
  \begin{itemize}
  \item[-] $\epsilon^*(x_1=0)=H+2E$,
  $\epsilon_{|H}:H\to (x_1=0)$ is an
  ordinary blow up and $H_{|E}$ is a smooth rational curve;
\item[-]  $\epsilon^*(S)=S_Y+4E$,
 \item[-] $S_{Y|E}$ has at most two irreducible components and in this
   case both curves are rational,
  \item[-]  $S_{Y|H}$ is a union of four smooth disjoint
  rational curves.
  \end{itemize}
  In particular:
  \begin{itemize}
  \item[-] both
  $H$ and $S_Y$ are on the smooth locus of $E$ and hence on the smooth
  locus of $Y$;
  \item[-] The surface $H$ is ruled by, the strict transforms of, the
    lines in the plane $(x_1=0)$ passing through the point $p$.
  \item[-] $S_Y$ has not further singularities along $H$
    \item[-] by Theorem~\ref{th:sing_iso_4} the surface $S_Y$ is not a
      resolution of singularities of $S$. That is $S_Y$ is singular.
  \end{itemize}
  Let
  $l_Y$ be a general curve in the ruling and
  $\Lambda_Y=\epsilon^{-1}_*(\Lambda_1)$ the
  strict transform linear system. Then  $E\cdot l_Y=1$ and by a direct computation I have
  \begin{itemize}
  \item[-] $\Lambda_Y\cdot
    l_Y=(\epsilon^*(\O(2))-2E)\cdot l_Y=0$
    \item[-] $S_Y\cdot
    l_Y=(\epsilon^*(\O(4))-4E)\cdot l_Y=0$
  \item[-] $K_Y\cdot l_Y=(\epsilon^*(\O(-4))+3E)\cdot l_Y=-1$.
  \item[-] $H\cdot l_Y=(\epsilon^*(\O(1))-2E)\cdot l_Y=-1$.
  \end{itemize}
  Then $H$  can be blown down to a smooth rational
  curve with a birational map $\mu:Y\to
  X_1$ and by construction $S_Y=\mu^*S_1$.
  This shows that  the unique singularity of
  $X_1$ is the singular point in $E$ and the surface $S_1$ is singular.
\end{proof}

Let $x\in\Sing(S_1)$  be a singular
point. Set $\pi:\P^6\rat\P^5$ be the projection from $
x$. Then $\pi_{|X_1}:X_1\rat X_2$ is birational and $\pi(S_1)\in
|\O_{X_2}(2)|$. I am therefore back to case ($S_2$). This shows
that, also in this case, $(\P^3,S)$ is Cremona Equivalent to a non
normal quartic.
\end{case}
To conclude observe that $S$ is of elliptic ruled type and it is
birational to a non normal quartic, say $V$. If
$V$ is a cone I conclude by Lemma~\ref{lem:cones} If $V$ is not a cone I apply
Lemma~\ref{lem:non_isolated_sing}.
\end{proof}

I am ready to compute the minimal Cremona degree of quartic surfaces.
\begin{theorem}\label{thm:class_mCd4}
  Let $S\subset\P^3$ be a quartic surface and $\sigma$ its minimal Cremona
  degree.  Then $\sigma\in\{1,3,4\}$ and 
  \begin{itemize}
  \item[$\sigma=1$] if and only if $S$ is rational
    \item[$\sigma=3$] if and only if it is of elliptic ruled type,
      i.e. it is birational to a ruled surface over an elliptic curve
      \item[$\sigma=4$] in all other cases, i.e. $S$ has at most
        rational double points or it is a cone of sectional genus at
        least $2$.
  \end{itemize}
\end{theorem}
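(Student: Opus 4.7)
The plan is to partition quartic surfaces according to their birational type using Proposition \ref{pro:Umezu} for normal quartics together with the non-normal classification, and then invoke the lemmas above in each case. The case $\sigma=1$ is immediate: Theorem \ref{th:quartic} says every rational quartic is Cremona equivalent to a plane, and conversely Cremona equivalence preserves the birational class, so $\sigma=1$ forces $S$ to be birational to $\P^2$.

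For $\sigma=3$, I would show both implications. Assume first that $S$ is of elliptic ruled type. Split into three sub-cases. If $S$ is a cone, its sectional genus is $1$, so by Lemma \ref{lem:cones} it is not of minimal Cremona degree, and the warm-up proposition on surfaces of degree at most $3$ places it at $\sigma=3$ via the cubic elliptic cone. If $S$ is normal and not a cone, Lemma \ref{lem:isolated_sings} produces a non-normal quartic Cremona equivalent to $S$, to which Lemma \ref{lem:non_isolated_sing} then applies, giving $\sigma\leq 3$. If $S$ is non-normal and not a cone, Lemma \ref{lem:non_isolated_sing} applies directly. In every sub-case $\sigma\leq 3$, and since $S$ is non-rational while every surface of degree at most $2$ is rational, $\sigma\geq 3$. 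Conversely, if $\sigma=3$ then $S$ is birational to a non-rational cubic, which by the warm-up proposition must be an elliptic cone, hence $S$ is of elliptic ruled type.

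For $\sigma=4$, the remaining quartics are (a) normal with at most rational double points (K3 type in Proposition \ref{pro:Umezu}), (b) normal of genus $3$ ruled type, and (c) cones of sectional genus at least $2$. Case (c) is handled immediately by Lemma \ref{lem:cones}. For (a) and (b), the birational class is a K3 surface or a ruled surface over a curve of genus $3$; since Cremona equivalence preserves this class and every surface of degree at most $3$ is either rational or a cone over an elliptic curve (by the warm-up proposition), no such $S$ can be Cremona equivalent to a surface of degree $<4$, so $\sigma=4$. Combined with the previous cases this exhausts all quartics, and the three values of $\sigma$ are mutually exclusive.

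The main potential obstacle is ensuring the case division is exhaustive, and in particular that cases (a), (b), (c) together with rational and elliptic ruled quartics cover every quartic. This relies on combining Proposition \ref{pro:Umezu} (for normal quartics, which rules out genus $2$ among the ruled possibilities) with the Urabe classification of non-normal quartics and the remark after Lemma \ref{lem:non_isolated_sing} which forces non-normal quartics of genus $2$ ruled type to be cones, hence landing in (c). The lower bound arguments for $\sigma\geq 3$ and $\sigma\geq 4$ are then routine: they amount to matching the birational class of $S$ against the short list of birational types realized by surfaces of low degree.
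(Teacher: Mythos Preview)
Your proof is correct and follows essentially the same case division as the paper, invoking the same lemmas (Lemma~\ref{lem:cones}, Lemma~\ref{lem:non_isolated_sing}, Lemma~\ref{lem:isolated_sings}, Theorem~\ref{th:quartic}, Proposition~\ref{pro:Umezu}). The one substantive difference is how you establish $\sigma=4$ for the K3 and genus~$3$ ruled cases: the paper appeals to \cite[Lemma~3.1]{MP1}, which says that if $(\P^3,S)$ has canonical singularities then $S$ is already of minimal degree, whereas you argue directly by matching birational classes --- a K3 or genus~$3$ ruled surface is not birational to anything in the degree~$\leq 3$ list (rational or elliptic cone), so $\sigma\geq 4$. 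Your route is more elementary and self-contained, needing only the warm-up proposition rather than an external log-pair result; the paper's route, on the other hand, isolates a reusable principle (canonical pairs are Cremona-minimal) that would scale to higher degrees if one had the requisite singularity classification. Your attention to exhaustiveness of the case division --- in particular using the remark after Lemma~\ref{lem:non_isolated_sing} to funnel non-normal genus~$2$ ruled quartics into the cone case --- is exactly what is needed to close the argument.
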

\begin{proof}
 If $S$ is a cone I conclude by Lemma~\ref{lem:cones}. If $S$ is
 rational I conclude by Theorem~\ref{th:quartic}. 
  By \cite[Lemma 3.1]{MP1} if $S$ is not of minimal degree the pair
  $(\P^3,S)$ has worse than canonical singularities. Therefore I may
  assume that $S$ is not rational, is not a cone and $(\P^3,S)$ has worse then
  canonical singularities. If $S$ is not normal,by 
  Lemma~\ref{lem:non_isolated_sing} it is not of minimal Cremona
  degree and being not rational it has $\sigma=3$ and it is of
  elliptic ruled type. If $S$ is normal, by
  Proposition~\ref{pro:Umezu}, it is of elliptic ruled type. By
  Lemma~\ref{lem:isolated_sings}, the surface $S$  is not of minimal
  degree and being not rational it has $\sigma=3$. On
  the other hand all surfaces of degree at most $2$ are rational and non
  rational surfaces of degree $3$ are elliptic cones. Therefore
  $\sigma=3$ if and only if $S$ is of elliptic ruled type.
\end{proof}

Thanks to the detailed classification of singularities I am able to
easily characterize the minimal Cremona degree of quartic surfaces with
isolated  singularities.
\begin{corollary}
  \label{cor:sigma_sing} Let $S\subset\P^3$ be a quartic surface with
  isolated singularities and
  $\sigma$ its minimal degree.
  Then
  \begin{itemize}
  \item[$\sigma=1$] if and only if there is a unique  elliptic singularity,
    \item[$\sigma=3$] if and only if there are either two
  elliptic singularities or one singular point of genus $2$,
  \item[ $\sigma=4$] if and only if 
  it is a cone or has only rational double points.
  \end{itemize}
\end{corollary}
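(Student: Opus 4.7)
The plan is to combine Theorem~\ref{thm:class_mCd4} with a numerical analysis that converts the birational classification of Proposition~\ref{pro:Umezu} into information about the isolated singular points of $S$. Since $\Sing(S)$ is finite, $S$ is normal, so Proposition~\ref{pro:Umezu} applies. For the minimal resolution $\pi:\tilde S\to S$, set $\delta_p=\ell((R^1\pi_\ast\O_{\tilde S})_p)$ at each $p\in\Sing(S)$, and $\delta=\sum_p\delta_p$. Rational singularities (in particular rational double points) have $\delta_p=0$; by the Umezu--Urabe classification used in Theorem~\ref{th:sing_iso_4} the remaining non-rational isolated singularities occurring on a quartic are either elliptic, with $\delta_p=1$, or of genus $2$, with $\delta_p=2$.

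The second step is the global computation of $\delta$. Since $S$ is a quartic in $\P^3$, the sequence $0\to\O_{\P^3}(-4)\to\O_{\P^3}\to\O_S\to 0$ gives $\chi(\O_S)=2$, and the Leray spectral sequence for $\pi$ yields $\chi(\O_{\tilde S})=\chi(\O_S)-\delta$. Combining this with $\chi(\O_{\tilde S})=1-q(\tilde S)+p_g(\tilde S)$ produces the key identity
$$\delta=1+q(\tilde S)-p_g(\tilde S).$$
Running through the four classes of Proposition~\ref{pro:Umezu} gives $\delta=0,1,2,4$ respectively. Hence $\delta=0$ characterises the K3 case (only RDPs); $\delta=1$ the rational case (exactly one elliptic singularity, the remaining ones being RDPs); $\delta=2$ the elliptic ruled case, which by the admissible values of $\delta_p$ is realised only by two elliptic singularities or one genus $2$ singularity (possibly together with RDPs); and $\delta=4$ the genus $3$ ruled case. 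A direct resolution computation, using that the cone over a smooth curve of genus $g$ satisfies $\delta_{\text{vertex}}=2-(1-g)=1+g$, shows the last case is realised precisely by the smooth quartic cone.

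Plugging these four possibilities into Theorem~\ref{thm:class_mCd4} yields $\sigma=4,1,3,4$ respectively, which is exactly the content of the corollary. The main obstacle is the third step: one must invoke the full Umezu--Urabe catalogue of isolated singularities on quartics to rule out alternative realisations of $\delta=2$ (for instance a single singularity with $\delta_p=2$ not belonging to the ``genus~$2$'' list, or mixed configurations forcing non-isolated singularities) and to confirm that the genus $3$ ruled class of Proposition~\ref{pro:Umezu} is exhausted by smooth quartic cones.
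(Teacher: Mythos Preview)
Your argument is correct and, in fact, considerably more explicit than the paper's own proof, which consists of the single sentence ``Immediate by classification in \cite{De}''. Both approaches ultimately rest on the Degtyarev/Umezu--Urabe classification of singularities on quartics, but you supply the conceptual bridge that the paper omits: the invariant $\delta=\chi(\O_S)-\chi(\O_{\tilde S})=1+q(\tilde S)-p_g(\tilde S)$ translates the four birational classes of Proposition~\ref{pro:Umezu} into the numerical constraint $\delta\in\{0,1,2,4\}$, and the additivity $\delta=\sum_p\delta_p$ then forces the singularity configurations listed in the corollary. This makes transparent \emph{why} one elliptic point corresponds to the rational case, two elliptic points or one genus~$2$ point to the elliptic ruled case, and so on, rather than leaving it as a lookup in \cite{De}.

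Two small remarks. First, your formula $\delta_{\text{vertex}}=2-(1-g)$ is not a general identity for cone singularities; the ``$2$'' is $\chi(\O_S)$ for a quartic, so the computation is specific to this setting (which is all you need, but the phrasing suggests more). Second, as you yourself note in the final paragraph, the step where you exclude exotic realisations of $\delta=2$ or $\delta=4$ (e.g.\ a hypothetical $\delta_p=3$ point together with an elliptic one, or a non-cone genus~$3$ ruled quartic) is not purely numerical and does require the classification input; so in the end your proof and the paper's are logically equivalent, with yours providing the organising principle that the paper leaves implicit.
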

\begin{proof}
  Immediate by classification in \cite{De}.
\end{proof}
\begin{remark}
  A similar result for non isolated singularities is possible, thanks
  to \cite{Ur}, but it is not as neat as the one in
  Corollary~\ref{cor:sigma_sing}.
  
  It is hopeless to look for a similar statement in  higher
  degrees. The singularities of surfaces of degree greater than $4$ are not
  classified. Even the Cremona Equivalence of rational surfaces is not
  easy to tackle due to the lack of classification of  rational
  surfaces  of degree greater than $4$. The most intriguing problem is
  to determine whether the vanishing of adjoints is equivalent to the
  Cremona Equivalence to a plane, like in the plane curve case.
\end{remark}

 Thanks to  Theorem~\ref{thm:class_mCd4} I am able to prove that any
  quartic surface whose minimal Cremona degree is less than $4$ has a
  non trivial stabilizer in the Cremona Group.
I start with the following probably known result that I prove for lack
of an adequate reference.
  \begin{lemma}
    \label{lem:cubic_cone_stab} Let $X\subset\P^{n}$ be a cubic hypersurface,
    then its stabilizer in $Cr_n$ is non trivial.
  \end{lemma}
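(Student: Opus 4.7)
The plan is to construct, for any irreducible and reduced cubic hypersurface $X\subset\P^n$, an explicit non-trivial birational involution $\omega_p\in Cr_n$ preserving $X$, via the classical third-intersection construction at a smooth point. First I would pick a smooth point $p\in X$ (which exists since $X$ is irreducible and reduced). A general line $\ell$ through $p$ meets the cubic $X$ in a length-three scheme $p+s_1+s_2$ with $s_1\neq s_2$ both distinct from $p$; there is then a unique involution $\sigma_\ell$ of $\ell\cong\P^1$ fixing $p$ and swapping $s_1\leftrightarrow s_2$. Letting $r\in\P^n$ vary and setting $\omega_p(r):=\sigma_\ell(r)$ for $\ell=\overline{pr}$ produces a rational self-map of $\P^n$ which is an involution preserving $X$ by construction.

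To turn this geometric definition into a verified rational map I would fix a cubic $F$ defining $X$, denote by $\tilde F$ its symmetric trilinear polar form, and expand $F(\lambda p+\mu r)=\mu\bigl(3P_1(r)\lambda^2+3P_2(r)\lambda\mu+F(r)\mu^2\bigr)$ using $F(p)=0$, where $P_1(r)=\tilde F(p,p,r)$ is linear in $r$ and $P_2(r)=\tilde F(p,r,r)$ is the polar quadric of $F$ at $p$. A short cross-ratio calculation then yields the closed-form expression $\omega_p(r)=[P_2(r)\,p-P_1(r)\,r]$, which is a well-defined degree-two homogeneous map $\P^n\dasharrow\P^n$. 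That it is an involution and preserves $X$ is immediate from the geometric interpretation, since $\sigma_\ell$ permutes the three intersections $\{p,s_1,s_2\}$ of $\ell$ with $X$.

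The main obstacle is to verify that $\omega_p$ is not the identity. From the explicit formula this reduces to showing that the polar quadric $P_2(r)$ does not vanish identically as a polynomial in $r$. A direct algebraic check in affine coordinates centred at $p$ shows that $P_2\equiv 0$ forces $F$ to be independent of the coordinate dual to $p$, i.e.\ $X$ to be a cone with vertex $p$. But the vertex of such a cone is always contained in its singular locus, so the smoothness of $p$ rules this out, and $\omega_p$ is therefore a genuinely non-trivial element of $Cr_n$ stabilizing $X$.
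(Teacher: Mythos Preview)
Your argument is correct and proves the lemma. Both your approach and the paper's rest on the same classical idea---the third-intersection involution of a cubic from one of its smooth points---but they differ in how this involution on $X$ is extended to an element of $Cr_n$.

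The paper takes an indirect route: it realizes $X$ as a hyperplane section of a nodal cubic $T=(x_{n+1}Q+F_3=0)\subset\P^{n+1}$ for general $Q$; projection from the node gives a birational identification $\pi:T\dasharrow\P^n$ carrying $X$ to itself, and one transports the standard third-intersection involution $\tau_p$ of $T$ (for $p\in X$ general) down to $\P^n$ as $\pi\circ\tau_p\circ\pi^{-1}$. Your construction stays in $\P^n$ throughout: you extend the involution line by line, obtaining the explicit quadratic formula $\omega_p(r)=[P_2(r)\,p-P_1(r)\,r]$ in terms of the polars of $F$ at $p$, and your check that $P_2\not\equiv0$ (equivalently, that $p$ is not a vertex of a cone) is exactly what guarantees $\omega_p\neq\mathrm{id}$. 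On $X$ the two maps coincide---both send $r\in X$ to the residual point of $\overline{pr}\cap X$---but off $X$ the paper's map depends on the auxiliary choice of $Q$ while yours is canonical in $p$. Your route is more elementary and self-contained and yields a concrete degree-two Cremona map; the paper's has the mild conceptual advantage of invoking only the familiar involution on a cubic hypersurface rather than building a new birational involution of $\P^n$ from scratch.
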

  \begin{proof}
    Let $T\subset\P^{n+1}$ be a  cubic hypersurface with a double point in
    $[0,\ldots,0,1]$ and containing $X$
    has the hyperplane section $x_{n+1}=0$. If $(F_3=0)=X\subset\P^n$ it is enough to
    consider
    $$T=(x_{n+1}Q+F_3=0),$$
    for $Q\in{\mathbb C}[x_0,\ldots,x_n]$ general.
    Then the projection
    $$\pi:T\dasharrow \P^n=(x_{n+1}=0)\subset\P^{n+1}$$
    from the point $[0,\ldots,0,1]$ is
    a birational map such that $\pi(X)=X$.
    Fix a general point $p\in X$ and let $\tau_p:T\dasharrow T$ be the involution
    induced by $p$. That is $\tau_p(q)$ is the third point of
    intersection of the line spanned by $p$ and $q$.
    By construction $\tau(X)=X$. Hence $\pi\circ\tau\circ\pi^{-1}$ is a non
    trivial element in $Cr_n$ that stabilizes $X$.
  \end{proof}
  \begin{corollary}
    \label{pro:stab} Let $S\subset\P^3$ be a quartic surface of minimal
    Cremona degree different from $4$. Then $S$ has a non trivial
    stabilizer in $Cr_3$.
  \end{corollary}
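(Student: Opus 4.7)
The plan is to derive everything from Theorem~\ref{thm:class_mCd4} combined with Lemma~\ref{lem:cubic_cone_stab}, plus the elementary observation that ``having a non trivial Cremona stabilizer'' is invariant under Cremona equivalence. First I would record this invariance: if $X,Y\subset\P^n$ are Cremona Equivalent via $\f:\P^n\rat\P^n$ with $\f(X)=Y$, and $\omega\in Cr_n$ is a non trivial stabilizer of $X$, then $\f\circ\omega\circ\f^{-1}\in Cr_n$ is a non trivial stabilizer of $Y$, since conjugation is a group automorphism of $Cr_n$ and sends the stabilizer of $X$ isomorphically onto the stabilizer of $Y$.

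Next I would reduce to two cases via Theorem~\ref{thm:class_mCd4}. Since $\sigma\neq 4$, we have either $\sigma=1$ or $\sigma=3$. In the case $\sigma=1$, the surface $S$ is Cremona Equivalent to a plane $H\subset\P^3$. Any linear automorphism of $\P^3$ fixing $H$ (indeed the whole projective group $\mathrm{PGL}_3$ stabilizing $H$) provides plenty of non trivial stabilizers of $H$ in $Cr_3$; alternatively a non trivial Cremona transformation of $\P^3$ that restricts to a non trivial Cremona transformation of $H$ (for instance a standard quadratic transformation whose base points lie on $H$) gives the same conclusion. Conjugating by the Cremona map sending $S$ to $H$ yields the required non trivial stabilizer of $S$.

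In the case $\sigma=3$, the surface $S$ is Cremona Equivalent to a cubic surface $X\subset\P^3$. Lemma~\ref{lem:cubic_cone_stab} applied with $n=3$ produces a non trivial element of $Cr_3$ stabilizing $X$, built by embedding $X$ as a hyperplane section of a cubic $T\subset\P^4$ with a node, projecting $T$ birationally onto $\P^3$ from the node, and conjugating a general point involution of the cubic 3-fold $T$ through this projection. Once again, conjugating this stabilizer of $X$ by the Cremona equivalence $S\rat X$ yields a non trivial stabilizer of $S$ in $Cr_3$.

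There is essentially no obstacle here beyond the bookkeeping above: the hard geometric work has been carried out in Theorem~\ref{thm:class_mCd4} (which identifies the Cremona Equivalent model as a plane or a cubic) and in Lemma~\ref{lem:cubic_cone_stab} (which produces the stabilizer in the cubic case). The only minor point to verify is that conjugation of a non identity element of $Cr_n$ by a (possibly non biregular) Cremona transformation remains non trivial, which is clear because $Cr_n$ is a group and conjugation is injective.
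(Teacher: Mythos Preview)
Your proof is correct and follows the same route as the paper: split into $\sigma=1$ and $\sigma=3$ via Theorem~\ref{thm:class_mCd4}, handle $\sigma=1$ directly, and for $\sigma=3$ invoke Lemma~\ref{lem:cubic_cone_stab} and transport the stabilizer back to $S$ by conjugation. The only cosmetic difference is that the paper notes the cubic model is in fact a cubic \emph{cone} (since non rational cubics are elliptic cones), whereas you just say ``cubic surface''; this is harmless because Lemma~\ref{lem:cubic_cone_stab} applies to arbitrary cubic hypersurfaces.
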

  \begin{proof}
  Let $\sigma$ be the minimal Cremona degree of $S$, then
  $\sigma\in\{1,3\}$ by Theorem~\ref{thm:class_mCd4}. If $\sigma=1$
  the result is immediate. If $\sigma=3$ then $S$ is Cremona
  equivalent to a cubic cone and I apply
  Lemma~\ref{lem:cubic_cone_stab} to conclude.
\end{proof}
\begin{remark}
  Note that for quartic surfaces with minimal Cremona degree $4$ the
  situation is completely different and not much is known, see
  \cite{ACM} for a modern reference.
\end{remark}

\end{document}